\newtheorem{theorem}{Theorem}[section]
\newtheorem{lemma}[theorem]{Lemma}
\newtheorem{corollary}[theorem]{Corollary}
\theoremstyle{definition}
\newtheorem{definition}[theorem]{Definition}
\newtheorem{example}[theorem]{Example}
\theoremstyle{remark}
\numberwithin{equation}{section}
\begin{document}
\date{{\scriptsize Received: , Accepted: .}}
\title[A New Solution to the Rhoades' Open Problem]{A New Solution to the
Rhoades' Open Problem with an Application}
\subjclass[2010]{Primary 54H25; Secondary 47H09, 47H10.}
\keywords{Discontinuity, $S$-metric, fixed-circle problem.}

\begin{abstract}
We give a new solution to the Rhoades' open problem on the discontinuity at
fixed point via the notion of an $S$-metric. To do this, we inspire with the
notion of a Zamfirescu mapping. Also, we consider a recent problem called
the \textquotedblleft fixed-circle problem\textquotedblright\ and propose a
new solution to this problem as an application of our technique.
\end{abstract}

\author[N. \"{O}ZG\"{U}R]{N\.{I}HAL \"{O}ZG\"{U}R}
\address[Nihal \"{O}zg\"{u}r]{Bal\i kesir University, Department of
Mathematics, 10145 Bal\i kesir, TURKEY}
\email{nihal@balikesir.edu.tr}
\author[N. TA\c{S}]{N\.{I}HAL TA\c{S}}
\address[Nihal Ta\c{s}]{Bal\i kesir University, Department of Mathematics,
10145 Bal\i kesir, TURKEY}
\email{nihaltas@balikesir.edu.tr}
\maketitle


\setcounter{page}{1}


\section{\textbf{Introduction and Preliminaries}}

\label{sec:intro} Fixed-point theory has been extensively studied by various
aspects. One of these is the discontinuity problem at fixed points (see \cite%
{Bisht-2017-1, Bisht-2017-2, Bisht-3, Bisht and Rakocevic, Bisht-Tbilisi,
Bisht 2019, PantA, Pant, Pant-Ozgur-Tas, Belgium} for some examples).
Discontinuous functions have been widely appeared in many areas of science
such as neural networks (for example, see \cite{Forti-2003, Liu-2011,
Lu-2005, Lu-2006}). In this paper we give a new solution to the Rhoades'
open problem (see \cite{Rhoades} for more details) on the discontinuity at
fixed point in the setting of an $S$-metric space which is a recently
introduced generalization of a metric space. $S$-metric spaces were
introduced in \cite{Sedgi-Shobe-Aliouche} by Sedgi et al., as follows:

\begin{definition}
\cite{Sedgi-Shobe-Aliouche} \label{def5} Let $X$ be a nonempty set and $%
\mathcal{S}:X\times X\times X\rightarrow \lbrack 0,\infty )$ be a function
satisfying the following conditions for all $x,y,z,a\in X:$

$S1)$ $\mathcal{S}(x,y,z)=0$ if and only if $x=y=z$,

$S2)$ $\mathcal{S}(x,y,z)\leq \mathcal{S}(x,x,a)+\mathcal{S}(y,y,a)+\mathcal{%
S}(z,z,a)$.

Then $\mathcal{S}$ is called an $S$-metric on $X$ and the pair $(X,\mathcal{S%
})$ is called an $S$-metric space.
\end{definition}

Relationships between a metric and an $S$-metric were given as follows:

\begin{lemma}
\label{lem4} \cite{Hieu} Let $(X,d)$ be a metric space. Then the following
properties are satisfied$:$

\begin{enumerate}
\item $\mathcal{S}_{d}(x,y,z)=d(x,z)+d(y,z)$ for all $x,y,z\in X$ is an $S$%
-metric on $X$.

\item $x_{n}\rightarrow x$ in $(X,d)$ if and only if $x_{n}\rightarrow x$ in
$(X,\mathcal{S}_{d})$.

\item $\{x_{n}\}$ is Cauchy in $(X,d)$ if and only if $\{x_{n}\}$ is Cauchy
in $(X,\mathcal{S}_{d}).$

\item $(X,d)$ is complete if and only if $(X,\mathcal{S}_{d})$ is complete.
\end{enumerate}
\end{lemma}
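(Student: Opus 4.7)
The plan is to treat the four items in order, since (4) will follow formally from (2) and (3). Throughout, convergence and the Cauchy property in the $S$-metric space $(X,\mathcal{S}_d)$ are understood in the sense of Sedgi--Shobe--Aliouche: $x_n\to x$ means $\mathcal{S}_d(x_n,x_n,x)\to 0$, and $\{x_n\}$ is Cauchy if $\mathcal{S}_d(x_n,x_n,x_m)\to 0$ as $n,m\to\infty$.

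For (1), I would verify axioms $S1$ and $S2$ directly from the definition $\mathcal{S}_d(x,y,z)=d(x,z)+d(y,z)$. Axiom $S1$ reduces to: $d(x,z)+d(y,z)=0$ iff $d(x,z)=d(y,z)=0$ iff $x=z$ and $y=z$, i.e.\ $x=y=z$, which is immediate from the positivity of $d$. For $S2$, note that $\mathcal{S}_d(x,x,a)=2d(x,a)$, $\mathcal{S}_d(y,y,a)=2d(y,a)$, $\mathcal{S}_d(z,z,a)=2d(z,a)$, so the right-hand side is $2d(x,a)+2d(y,a)+2d(z,a)$. Two applications of the usual triangle inequality, $d(x,z)\le d(x,a)+d(a,z)$ and $d(y,z)\le d(y,a)+d(a,z)$, give $\mathcal{S}_d(x,y,z)\le d(x,a)+d(y,a)+2d(z,a)$, which is bounded above by the required quantity. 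Thus $\mathcal{S}_d$ is an $S$-metric.

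For (2) and (3), I would exploit the key identity $\mathcal{S}_d(u,u,v)=2d(u,v)$. Then $\mathcal{S}_d(x_n,x_n,x)=2d(x_n,x)$, so $\mathcal{S}_d(x_n,x_n,x)\to 0$ if and only if $d(x_n,x)\to 0$, which gives (2). Similarly $\mathcal{S}_d(x_n,x_n,x_m)=2d(x_n,x_m)$, and the Cauchy equivalence in (3) follows at once. This is the only substantive step, and it is essentially a one-line computation; no real obstacle arises because the off-diagonal evaluation of $\mathcal{S}_d$ with two repeated arguments collapses to $2d$.

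Finally, (4) is a formal consequence of (2) and (3): a sequence is Cauchy in $(X,d)$ iff it is Cauchy in $(X,\mathcal{S}_d)$, and convergence of any fixed candidate limit transfers between the two spaces in either direction, so every Cauchy sequence in one space converges in that space iff the same holds in the other. The only point worth being careful about is to state the $S$-metric notions of convergence and Cauchyness explicitly before invoking them, so that the reduction via $\mathcal{S}_d(u,u,v)=2d(u,v)$ is unambiguous; beyond that, the proof is entirely routine.
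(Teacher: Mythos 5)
Your proof is correct: the verification of $S1$ and $S2$ for $\mathcal{S}_d$, the reduction of convergence and Cauchyness via the identity $\mathcal{S}_d(u,u,v)=2d(u,v)$, and the formal deduction of (4) from (2) and (3) are all sound. Note that the paper itself offers no proof of this lemma---it is quoted from the reference [Hieu] as a known result---so there is nothing to compare against; your argument is the standard routine one and fills that gap adequately.
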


The metric $\mathcal{S}_{d}$ was called as the $S$-metric generated by $d$
\cite{Ozgur-mathsci}. Some examples of an $S$-metric which is not generated
by any metric are known (see \cite{Hieu, Ozgur-mathsci} for more details).

Furthermore, Gupta claimed that every $S$-metric on $X$ defines a metric $%
d_{S}$ on $X$ as follows$:$%
\begin{equation}
d_{S}(x,y)=\mathcal{S}(x,x,y)+\mathcal{S}(y,y,x),  \label{ds}
\end{equation}%
for all $x,y\in X$ \cite{Gupta}. However, since the triangle inequality does
not satisfied for all elements of $X$ everywhen, the function $d_{S}(x,y)$
defined in (\ref{ds}) does not always define a metric (see \cite%
{Ozgur-mathsci}).

In the following, we see an example of an $S$-metric which is not generated
by any metric.

\begin{example}
\cite{Ozgur-mathsci} \label{exm:S-metric} Let $X=%
\mathbb{R}
$ and the function $\mathcal{S}:X\times X\times X\rightarrow \lbrack
0,\infty )$ be defined as%
\begin{equation*}
\mathcal{S}(x,y,z)=\left\vert x-z\right\vert +\left\vert x+z-2y\right\vert
\text{,}
\end{equation*}%
for all $x,y,z\in
\mathbb{R}
$. Then $\mathcal{S}$ is an $S$-metric which is not generated by any metric
and the pair $(X,\mathcal{S})$ is an $S$-metric space.
\end{example}

The following lemma will be used in the next sections.

\begin{lemma}
\label{lem5} \cite{Sedgi-Shobe-Aliouche} Let $(X,\mathcal{S})$ be an $S$%
-metric space. Then we have%
\begin{equation*}
\mathcal{S}(x,x,y)=\mathcal{S}(y,y,x)\text{.}
\end{equation*}
\end{lemma}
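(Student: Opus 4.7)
The plan is to derive the symmetry $\mathcal{S}(x,x,y)=\mathcal{S}(y,y,x)$ purely from the two axioms $S1$ and $S2$, by applying $S2$ twice with carefully chosen arguments so that most terms on the right-hand side collapse to zero via $S1$.

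First I would record the immediate consequence of $S1$ that $\mathcal{S}(a,a,a)=0$ for every $a\in X$, since the condition $a=a=a$ holds trivially. This is the engine that makes the rectangle inequality useful here: whenever two of the three arguments of $\mathcal{S}$ coincide with the auxiliary point $a$ in $S2$, that term vanishes.

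Next, I would apply $S2$ to the triple $(x,x,y)$ with auxiliary point $a=x$. Two of the three resulting terms on the right become $\mathcal{S}(x,x,x)=0$, leaving only $\mathcal{S}(y,y,x)$, so
\[
\mathcal{S}(x,x,y)\leq \mathcal{S}(x,x,x)+\mathcal{S}(x,x,x)+\mathcal{S}(y,y,x)=\mathcal{S}(y,y,x).
\]
Then, swapping the roles of $x$ and $y$, I would apply $S2$ to $(y,y,x)$ with auxiliary point $a=y$, yielding the reverse inequality $\mathcal{S}(y,y,x)\leq \mathcal{S}(x,x,y)$. Combining the two inequalities gives the desired equality.

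There is no real obstacle here; the only subtlety is recognizing the right choice of the auxiliary point $a$ in $S2$ so that as many terms as possible vanish through $S1$. Any less economical choice (e.g.\ $a=y$ in the first step) merely yields the trivial bound $\mathcal{S}(x,x,y)\leq 2\mathcal{S}(x,x,y)$, which would stall the argument.
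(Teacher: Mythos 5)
Your proof is correct and is exactly the standard argument for this lemma (the paper itself states it without proof, citing Sedghi--Shobe--Aliouche, whose original proof proceeds in precisely this way: apply $S2$ to $(x,x,y)$ with $a=x$ and to $(y,y,x)$ with $a=y$, using $\mathcal{S}(a,a,a)=0$ from $S1$ to kill the extra terms). Your remark about the unproductive choice $a=y$ is also accurate.
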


In this paper, our aim is to obtain a new solution to the Rhoades' open
problem on the existence of a contractive condition which is strong enough
to generate a fixed point but which does not force the map to be continuous
at the fixed point. To do this, we inspire of a result of Zamfirescu given
in \cite{Zamfirescu}.

On the other hand, a recent aspect to the fixed point theory is to consider
geometric properties of the set $Fix(T)$, the fixed point set of the
self-mapping $T$. Fixed circle problem (resp. fixed disc problem) have been
studied in this context (see \cite{Bisht 2019, Ozgur-chapter, Ozgur-Tas-malaysian, Ozgur-Tas-circle-thesis, Ozgur-Tas-Celik, ozgur-aip, Ozgur-simulation, Pant-Ozgur-Tas, Belgium, Tas math, Tas}). As an application, we
present a new solution to these problems. We give necessary examples to
support our theoretical results.

\section{\textbf{Main Results}}

\label{sec:1} From now on, we assume that $(X,\mathcal{S})$ is an $S$-metric
space and $T:X\rightarrow X$ is a self-mapping. In this section we use the
numbers defined as%
\begin{equation*}
M_{z}\left( x,y\right) =\max \left\{ ad\left( x,y\right) ,\frac{b}{2}\left[
d\left( x,Tx\right) +d\left( y,Ty\right) \right] ,\frac{c}{2}\left[ d\left(
x,Ty\right) +d\left( y,Tx\right) \right] \right\}
\end{equation*}%
and%
\begin{equation*}
M_{z}^{S}\left( x,y\right) =\max \left\{
\begin{array}{c}
a\mathcal{S}\left( x,x,y\right) ,\frac{b}{2}\left[ \mathcal{S}\left(
x,x,Tx\right) +\mathcal{S}\left( y,y,Ty\right) \right] , \\
\frac{c}{2}\left[ \mathcal{S}\left( x,x,Ty\right) +\mathcal{S}\left(
y,y,Tx\right) \right]
\end{array}%
\right\} ,
\end{equation*}%
where $a,b\in \left[ 0,1\right) $ and $c\in \left[ 0,\frac{1}{2}\right] $.

We give the following theorem as a new solution to the Rhoades' open problem.

\begin{theorem}
\label{thm1} Let $(X,\mathcal{S})$ be a complete $S$-metric space and $T$ a
self-mapping on $X$ satisfying the conditions

$i)$ There exists a function $\phi :\mathbb{R}^{+}\rightarrow \mathbb{R}^{+}$
such that $\phi (t)<t$ for each $t>0$ and
\begin{equation*}
\mathcal{S}\left( Tx,Tx,Ty\right) \leq \phi \left( M_{z}^{S}\left(
x,y\right) \right) \text{,}
\end{equation*}%
for all $x,y\in X$,

$ii)$ There exists a $\delta =\delta \left( \varepsilon \right) >0$ such
that $\varepsilon <M_{z}^{S}\left( x,y\right) <\varepsilon +\delta $ implies
$\mathcal{S}\left( Tx,Tx,Ty\right) \leq \varepsilon $ for a given $%
\varepsilon >0$.

Then $T$ has a unique fixed point $u\in X$. Also, $T$ is discontinuous at $u$
if and only if $\underset{x\rightarrow u}{\lim }M_{z}^{S}\left( x,u\right)
\neq 0$.
\end{theorem}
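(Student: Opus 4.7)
\emph{Approach.} The plan is a Picard iteration followed by a Meir--Keeler style argument. Condition $(i)$ supplies the per-step contraction, while $(ii)$ is used twice: to upgrade monotone decrease of the successive distances to decrease all the way to $0$, and to force Cauchy-ness of the iterates. All $S$-metric manipulations will rely on axiom $(S2)$ together with Lemma~\ref{lem5}.

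\emph{Decrease of the iterates.} Pick any $x_0 \in X$ and set $x_{n+1} = Tx_n$ and $s_n = \mathcal{S}(x_n, x_n, x_{n+1})$. If some $s_n = 0$ then $x_n$ is already a fixed point, so I may assume $s_n > 0$ for all $n$. Taking $(x, y) = (x_{n-1}, x_n)$ in $M_z^S$ and applying $(S2)$ with Lemma~\ref{lem5} to bound $\mathcal{S}(x_{n-1}, x_{n-1}, x_{n+1}) \le 2s_{n-1} + s_n$, the restrictions $a, b \in [0, 1)$ and $c \in [0, 1/2]$ yield
\begin{equation*}
M_z^S(x_{n-1}, x_n) \le \max\{s_{n-1}, s_n\}.
\end{equation*}
From $(i)$ I get $s_n \le \phi(M_z^S(x_{n-1}, x_n)) < M_z^S(x_{n-1}, x_n)$, which is incompatible with $s_n \ge s_{n-1}$; hence $\{s_n\}$ strictly decreases to some limit $s \ge 0$. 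If $s > 0$, apply $(ii)$ with $\varepsilon = s$ to get $\delta > 0$; for large $n$, $s < s_{n-1} < s + \delta$, so by the display above $s < M_z^S(x_{n-1}, x_n) < s + \delta$, and $(ii)$ returns $s_n \le s$, contradicting $s_n > s$. Hence $s_n \to 0$.

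\emph{Cauchy-ness.} I expect this to be the main technical obstacle, via the usual Meir--Keeler induction adapted to $S$-metrics. Given $\varepsilon > 0$, take the matching $\delta \in (0, \varepsilon]$ from $(ii)$ and choose $N$ with $s_n < \delta/4$ for $n \ge N$. The plan is to prove by induction on the gap $k - j$ that $\mathcal{S}(x_j, x_j, x_k) < \varepsilon + \delta$ for all $k > j \ge N$. In the inductive step one splits $\mathcal{S}(x_j, x_j, x_k)$ through $x_{j+1}$ and $x_{k-1}$ via $(S2)$, reducing the task to controlling $\mathcal{S}(Tx_{j-1}, Tx_{j-1}, Tx_{k-1})$; one then handles the two cases $M_z^S(x_{j-1}, x_{k-1}) \le \varepsilon$ (via $(i)$ and $\phi(t) < t$) and $\varepsilon < M_z^S(x_{j-1}, x_{k-1}) < \varepsilon + \delta$ (via $(ii)$ directly). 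By completeness of $(X, \mathcal{S})$, there exists $u \in X$ with $x_n \to u$.

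\emph{Fixed point, uniqueness, and the discontinuity equivalence.} Evaluating $M_z^S(x_n, u)$ and letting $n \to \infty$, using $s_n \to 0$, $x_n \to u$, and continuity of $\mathcal{S}$ in its entries, each term is dominated in the limit by $\max\{b/2, c/2\} \mathcal{S}(u, u, Tu)$; combining with $\mathcal{S}(x_{n+1}, x_{n+1}, Tu) \le \phi(M_z^S(x_n, u)) < M_z^S(x_n, u)$ from $(i)$ and passing to $\limsup$ gives $\mathcal{S}(u, u, Tu) \le \max\{b/2, c/2\} \mathcal{S}(u, u, Tu)$, and since $\max\{b/2, c/2\} < 1$ this forces $Tu = u$. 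For uniqueness, if $Tv = v$ then Lemma~\ref{lem5} gives $M_z^S(u, v) = \max\{a, c\} \mathcal{S}(u, u, v)$; since $\max\{a, c\} < 1$, condition $(i)$ forces $\mathcal{S}(u, u, v) = 0$. Finally, for the discontinuity equivalence: if $\lim_{x \to u} M_z^S(x, u) = 0$, then $(i)$ with the squeeze $0 \le \phi(t) < t$ gives $\mathcal{S}(Tx, Tx, u) \to 0$, i.e., $T$ is continuous at $u$; conversely, continuity of $T$ at $u$ together with continuity of $\mathcal{S}$ in its arguments sends each of the terms comprising $M_z^S(x, u)$ to $0$ as $x \to u$.
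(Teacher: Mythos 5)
Your proposal is correct and follows essentially the same route as the paper: Picard iteration, strict decrease of $s_n=\mathcal{S}(x_n,x_n,x_{n+1})$, condition $(ii)$ to force $s_n\to 0$, the Jachymski/Meir--Keeler induction for the Cauchy property, and the same limit, uniqueness, and discontinuity arguments. The only differences are cosmetic: your single unified bound $M_z^S(x_{n-1},x_n)\leq\max\{s_{n-1},s_n\}$ (using $(S2)$, Lemma~\ref{lem5}, and $c\leq\frac{1}{2}$) replaces the paper's three-case analysis and is arguably cleaner, while your Cauchy step is left as an outline at roughly the same level of detail as the paper's own treatment of the troublesome $\frac{c}{2}$-term.
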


\begin{proof}
At first, we define the number%
\begin{equation*}
\xi =\max \left\{ a,\frac{2}{2-b},\frac{c}{2-2c}\right\} .
\end{equation*}%
Clearly, we have $\xi <1$.

By the condition $(i)$, there exists a function $\phi :\mathbb{R}%
^{+}\rightarrow \mathbb{R}^{+}$ such that $\phi (t)<t$ for each $t>0$ and
\begin{equation*}
\mathcal{S}\left( Tx,Tx,Ty\right) \leq \phi \left( M_{z}^{S}\left(
x,y\right) \right) \text{,}
\end{equation*}%
for all $x,y\in X$. Using the properties of $\phi $, we obtain%
\begin{equation}
\mathcal{S}\left( Tx,Tx,Ty\right) <M_{z}^{S}\left( x,y\right) \text{,}
\label{eqno1}
\end{equation}%
whenever $M_{z}^{S}\left( x,y\right) >0$.

Let us consider any $x_{0}\in X$ with $x_{0}\neq Tx_{0}$ and define a
sequence $\left\{ x_{n}\right\} $ as $x_{n+1}=Tx_{n}=T^{n}x_{0}$ for all $%
n=0,1,2,3,...$. Using the condition $(i)$ and the inequality (\ref{eqno1}),
we get
\begin{eqnarray}
\mathcal{S}\left( x_{n},x_{n},x_{n+1}\right) &=&\mathcal{S}\left(
Tx_{n-1},Tx_{n-1},Tx_{n}\right) \leq \phi \left( M_{z}^{S}\left(
x_{n-1},x_{n}\right) \right)  \label{eqno2} \\
&<&M_{z}^{S}\left( x_{n-1},x_{n}\right)  \notag \\
&=&\max \left\{
\begin{array}{c}
a\mathcal{S}\left( x_{n-1},x_{n-1},x_{n}\right) , \\
\frac{b}{2}\left[ \mathcal{S}\left( x_{n-1},x_{n-1},Tx_{n-1}\right) +%
\mathcal{S}\left( x_{n},x_{n},Tx_{n}\right) \right] , \\
\frac{c}{2}\left[ \mathcal{S}\left( x_{n-1},x_{n-1},Tx_{n}\right) +\mathcal{S%
}\left( x_{n},x_{n},Tx_{n-1}\right) \right]%
\end{array}%
\right\}  \notag \\
&=&\max \left\{
\begin{array}{c}
a\mathcal{S}\left( x_{n-1},x_{n-1},x_{n}\right) , \\
\frac{b}{2}\left[ \mathcal{S}\left( x_{n-1},x_{n-1},x_{n}\right) +\mathcal{S}%
\left( x_{n},x_{n},x_{n+1}\right) \right] , \\
\frac{c}{2}\left[ \mathcal{S}\left( x_{n-1},x_{n-1},x_{n+1}\right) +\mathcal{%
S}\left( x_{n},x_{n},x_{n}\right) \right]%
\end{array}%
\right\}  \notag \\
&=&\max \left\{
\begin{array}{c}
a\mathcal{S}\left( x_{n-1},x_{n-1},x_{n}\right) , \\
\frac{b}{2}\left[ \mathcal{S}\left( x_{n-1},x_{n-1},x_{n}\right) +\mathcal{S}%
\left( x_{n},x_{n},x_{n+1}\right) \right] , \\
\frac{c}{2}\mathcal{S}\left( x_{n-1},x_{n-1},x_{n+1}\right)%
\end{array}%
\right\} .  \notag
\end{eqnarray}%
Assume that $M_{z}^{S}\left( x_{n-1},x_{n}\right) =a\mathcal{S}\left(
x_{n-1},x_{n-1},x_{n}\right) $. Then using the inequality (\ref{eqno2}), we
have
\begin{equation*}
\mathcal{S}\left( x_{n},x_{n},x_{n+1}\right) <a\mathcal{S}\left(
x_{n-1},x_{n-1},x_{n}\right) \leq \xi \mathcal{S}\left(
x_{n-1},x_{n-1},x_{n}\right) <\mathcal{S}\left( x_{n-1},x_{n-1},x_{n}\right)
\end{equation*}%
and so%
\begin{equation}
\mathcal{S}\left( x_{n},x_{n},x_{n+1}\right) <\mathcal{S}\left(
x_{n-1},x_{n-1},x_{n}\right) .  \label{eqno3}
\end{equation}%
Let $M_{z}^{S}\left( x_{n-1},x_{n}\right) =\frac{b}{2}\left[ \mathcal{S}%
\left( x_{n-1},x_{n-1},x_{n}\right) +\mathcal{S}\left(
x_{n},x_{n},x_{n+1}\right) \right] .$ Again using the inequality (\ref{eqno2}%
), we get%
\begin{equation*}
\mathcal{S}\left( x_{n},x_{n},x_{n+1}\right) <\frac{b}{2}\left[ \mathcal{S}%
\left( x_{n-1},x_{n-1},x_{n}\right) +\mathcal{S}\left(
x_{n},x_{n},x_{n+1}\right) \right] \text{,}
\end{equation*}%
which implies%
\begin{equation*}
\left( 1-\frac{b}{2}\right) \mathcal{S}\left( x_{n},x_{n},x_{n+1}\right) <%
\frac{b}{2}\mathcal{S}\left( x_{n-1},x_{n-1},x_{n}\right)
\end{equation*}%
and hence%
\begin{equation*}
\mathcal{S}\left( x_{n},x_{n},x_{n+1}\right) <\frac{b}{2-b}\mathcal{S}\left(
x_{n-1},x_{n-1},x_{n}\right) \leq \xi \mathcal{S}\left(
x_{n-1},x_{n-1},x_{n}\right) .
\end{equation*}%
This yields%
\begin{equation}
\mathcal{S}\left( x_{n},x_{n},x_{n+1}\right) <\mathcal{S}\left(
x_{n-1},x_{n-1},x_{n}\right) .  \label{eqno4}
\end{equation}

Suppose that $M_{z}^{S}\left( x_{n-1},x_{n}\right) =\frac{c}{2}\mathcal{S}%
\left( x_{n-1},x_{n-1},x_{n+1}\right) .$ Then using the inequality (\ref%
{eqno2}), Lemma \ref{lem5} and the condition $(S2)$, we obtain
\begin{eqnarray*}
\mathcal{S}\left( x_{n},x_{n},x_{n+1}\right) &<&\frac{c}{2}\mathcal{S}\left(
x_{n-1},x_{n-1},x_{n+1}\right) =\frac{c}{2}\mathcal{S}\left(
x_{n+1},x_{n+1},x_{n-1}\right) \\
&\leq &\frac{c}{2}\left[ \mathcal{S}\left( x_{n-1},x_{n-1},x_{n}\right) +2%
\mathcal{S}\left( x_{n+1},x_{n+1},x_{n}\right) \right] \\
&=&\frac{c}{2}\mathcal{S}\left( x_{n-1},x_{n-1},x_{n}\right) +c\mathcal{S}%
\left( x_{n+1},x_{n+1},x_{n}\right) \\
&=&\frac{c}{2}\mathcal{S}\left( x_{n-1},x_{n-1},x_{n}\right) +c\mathcal{S}%
\left( x_{n},x_{n},x_{n+1}\right) ,
\end{eqnarray*}%
which implies
\begin{equation*}
\left( 1-c\right) \mathcal{S}\left( x_{n},x_{n},x_{n+1}\right) <\frac{c}{2}%
\mathcal{S}\left( x_{n-1},x_{n-1},x_{n}\right) .
\end{equation*}%
Considering this, we find%
\begin{equation*}
\mathcal{S}\left( x_{n},x_{n},x_{n+1}\right) <\frac{c}{2\left( 1-c\right) }%
\mathcal{S}\left( x_{n-1},x_{n-1},x_{n}\right) \leq \xi \mathcal{S}\left(
x_{n-1},x_{n-1},x_{n}\right)
\end{equation*}%
and so
\begin{equation}
\mathcal{S}\left( x_{n},x_{n},x_{n+1}\right) <\mathcal{S}\left(
x_{n-1},x_{n-1},x_{n}\right) .  \label{eqno5}
\end{equation}%
If we set $\alpha _{n}=\mathcal{S}\left( x_{n},x_{n},x_{n+1}\right) $, then
by the inequalities (\ref{eqno3}), (\ref{eqno4}) and (\ref{eqno5}), we find%
\begin{equation}
\alpha _{n}<\alpha _{n-1},  \label{eqno6}
\end{equation}%
that is, $\alpha _{n}$ is strictly decreasing sequence of positive real
numbers whence the sequence $\alpha _{n}$ tends to a limit $\alpha \geq 0$.

Assume that $\alpha >0$. There exists a positive integer $k\in \mathbb{N}$
such that $n\geq k$ implies%
\begin{equation}
\alpha <\alpha _{n}<\alpha +\delta (\alpha ).  \label{eqno7}
\end{equation}%
Using the condition $(ii)$ and the inequality (\ref{eqno6}), we get%
\begin{equation}
\mathcal{S}\left( Tx_{n-1},Tx_{n-1},Tx_{n}\right) =\mathcal{S}\left(
x_{n},x_{n},x_{n+1}\right) =\alpha _{n}<\alpha ,  \label{eqno8}
\end{equation}%
for $n\geq k$. Then the inequality (\ref{eqno8}) contradicts to the
inequality (\ref{eqno7}). Therefore, it should be $\alpha =0$.

Now we prove that $\left\{ x_{n}\right\} $ is a Cauchy sequence. Let us fix
an $\varepsilon >0$. Without loss of generality, we suppose that $\delta
\left( \varepsilon \right) <\varepsilon $. There exists $k\in \mathbb{N}$
such that%
\begin{equation*}
\mathcal{S}\left( x_{n},x_{n},x_{n+1}\right) =\alpha _{n}<\frac{\delta }{4}%
\text{,}
\end{equation*}%
for $n\geq k$ since $\alpha _{n}\rightarrow 0$. Using the mathematical
induction and the Jachymski's technique (see \cite{Jach-1, Jach-2} for more
details) we show
\begin{equation}
\mathcal{S}\left( x_{k},x_{k},x_{k+n}\right) <\varepsilon +\frac{\delta }{2}%
\text{,}  \label{eqno9}
\end{equation}%
for any $n\in \mathbb{N}$. At first, the inequality (\ref{eqno9}) holds for $%
n=1$ since
\begin{equation*}
\mathcal{S}\left( x_{k},x_{k},x_{k+1}\right) =\alpha _{k}<\frac{\delta }{4}%
<\varepsilon +\frac{\delta }{2}.
\end{equation*}%
Assume that the inequality (\ref{eqno9}) holds for some $n$. We show that
the inequality (\ref{eqno9}) holds for $n+1$. By the condition $(S2)$, we get%
\begin{equation*}
\mathcal{S}\left( x_{k},x_{k},x_{k+n+1}\right) \leq 2\mathcal{S}\left(
x_{k},x_{k},x_{k+1}\right) +\mathcal{S}\left(
x_{k+n+1},x_{k+n+1},x_{k+1}\right) \text{.}
\end{equation*}%
From Lemma \ref{lem5}, we have
\begin{equation*}
\mathcal{S}\left( x_{k+n+1},x_{k+n+1},x_{k+1}\right) =\mathcal{S}\left(
x_{k+1},x_{k+1},x_{k+n+1}\right)
\end{equation*}%
and so it suffices to prove
\begin{equation*}
\mathcal{S}\left( x_{k+1},x_{k+1},x_{k+n+1}\right) \leq \varepsilon \text{.}
\end{equation*}%
To do this, we show%
\begin{equation*}
M_{z}^{S}(x_{k},x_{k+n})\leq \varepsilon +\delta \text{.}
\end{equation*}%
Then we find%
\begin{equation*}
a\mathcal{S}(x_{k},x_{k},x_{k+n})<\mathcal{S}(x_{k},x_{k},x_{k+n})<%
\varepsilon +\frac{\delta }{2}\text{,}
\end{equation*}%
\begin{eqnarray*}
\frac{b}{2}\left[ \mathcal{S}(x_{k},x_{k},x_{k+1})+\mathcal{S}%
(x_{k+n},x_{k+n},x_{k+n+1})\right] &<&\mathcal{S}(x_{k},x_{k},x_{k+1})+%
\mathcal{S}(x_{k+n},x_{k+n},x_{k+n+1}) \\
&<&\frac{\delta }{4}+\frac{\delta }{4}=\frac{\delta }{2}
\end{eqnarray*}%
and%
\begin{eqnarray}
&&\frac{c}{2}\left[ \mathcal{S}(x_{k},x_{k},x_{k+n+1})+\mathcal{S}%
(x_{k+n},x_{k+n},x_{k+1})\right]  \notag \\
&\leq &\frac{c}{2}\left[ 4\mathcal{S}(x_{k},x_{k},x_{k+1})+\mathcal{S}%
(x_{k+1},x_{k+1},x_{k+1+n})+\mathcal{S}(x_{k},x_{k},x_{k+n})\right]  \notag
\\
&=&c\left[ 2\mathcal{S}(x_{k},x_{k},x_{k+1})+\frac{\mathcal{S}%
(x_{k+1},x_{k+1},x_{k+1+n})}{2}+\frac{\mathcal{S}(x_{k},x_{k},x_{k+n})}{2}%
\right]  \notag \\
&<&c\left[ \frac{\delta }{2}+\varepsilon +\frac{\delta }{2}\right]
<\varepsilon +\delta \text{.}  \label{eqno10}
\end{eqnarray}%
Using the definition of $M_{z}^{S}(x_{k},x_{k+n})$, the condition $(ii)$ and
the inequalities (\ref{eqno10}), we obtain%
\begin{equation*}
M_{z}^{S}(x_{k},x_{k+n})\leq \varepsilon +\delta
\end{equation*}%
and so%
\begin{equation*}
\mathcal{S}\left( x_{k+1},x_{k+1},x_{k+n+1}\right) \leq \varepsilon \text{.}
\end{equation*}%
Hence we get%
\begin{equation*}
\mathcal{S}(x_{k},x_{k},x_{k+n+1})<\varepsilon +\frac{\delta }{2}\text{,}
\end{equation*}%
whence $\{x_{n}\}$ is Cauchy. From the completeness hypothesis, there exists
a point $u\in X$ such that $x_{n}\rightarrow u$ for $n\rightarrow \infty $.
Also we get%
\begin{equation*}
\underset{n\rightarrow \infty }{\lim }Tx_{n}=\underset{n\rightarrow \infty }{%
\lim }x_{n+1}=u\text{.}
\end{equation*}%
Now we prove that $u$ is a fixed point of $T$. On the contrary, $u$ is not a
fixed point of $T$. Then using the condition $(i)$ and the property of $\phi
$, we obtain%
\begin{eqnarray*}
\mathcal{S}(Tu,Tu,Tx_{n}) &\leq &\phi (M_{z}^{S}(u,x_{n}))<M_{z}^{S}(u,x_{n})
\\
&=&\max \left\{
\begin{array}{c}
a\mathcal{S}(u,u,x_{n}),\frac{b}{2}\left[ \mathcal{S}(u,u,Tu)+\mathcal{S}%
(x_{n},x_{n},Tx_{n})\right] , \\
\frac{c}{2}\left[ \mathcal{S}(u,u,Tx_{n})+\mathcal{S}(x_{n},x_{n},Tu)\right]%
\end{array}%
\right\}
\end{eqnarray*}%
and so taking a limit for $n\rightarrow \infty $. Using Lemma \ref{lem5}, we
find%
\begin{equation*}
\mathcal{S}(Tu,Tu,u)<\max \left\{ \frac{b}{2}\mathcal{S}(u,u,Tu),\frac{c}{2}%
\mathcal{S}(u,u,Tu)\right\} <\mathcal{S}(Tu,Tu,u)\text{,}
\end{equation*}%
a contradiction. It should be $Tu=u$. We show that $u$ is a unique fixed
point of $T$. Let $v$ be another fixed point of $T$ such that $u\neq v$.
From the condition $(i)$ and Lemma \ref{lem5}, we have%
\begin{eqnarray*}
\mathcal{S}(Tu,Tu,Tv) &=&\mathcal{S}(u,u,v)\leq \phi
(M_{z}^{S}(u,v))<M_{z}^{S}(u,v) \\
&=&\max \left\{
\begin{array}{c}
a\mathcal{S}(u,u,v),\frac{b}{2}\left[ \mathcal{S}(u,u,Tu)+\mathcal{S}(v,v,Tv)%
\right] , \\
\frac{c}{2}\left[ \mathcal{S}(u,u,Tv)+\mathcal{S}(v,v,Tu)\right]%
\end{array}%
\right\} \\
&=&\max \left\{ a\mathcal{S}(u,u,v),c\mathcal{S}(u,u,v)\right\} <\mathcal{S}%
(u,u,v)\text{,}
\end{eqnarray*}%
a contradiction. So it should be $u=v$. Therefore, $T$ has a unique fixed
point $u\in X$.

Finally, we prove that $T$ is discontinuous at $u$ if and only if $\underset{%
x\rightarrow u}{\lim }M_{z}^{S}(x,u)\neq 0$. To do this, we can easily show
that $T$ is continuous at $u$ if and only if $\underset{x\rightarrow u}{\lim
}M_{z}^{S}(x,u)=0$. Suppose that $T$ is continuous at the fixed point $u$
and $x_{n}\rightarrow u$. Hence we get $Tx_{n}\rightarrow Tu=u$ and using
the condition $(S2)$, we find%
\begin{equation*}
\mathcal{S}(x_{n},x_{n},Tx_{n})\leq 2\mathcal{S}(x_{n},x_{n},u)+\mathcal{S}%
(Tx_{n},Tx_{n},u)\rightarrow 0\text{,}
\end{equation*}%
as $x_{n}\rightarrow u$. So we get $\underset{x_{n}\rightarrow u}{\lim }%
M_{z}^{S}(x_{n},u)=0$. On the other hand, let us consider $\underset{%
x_{n}\rightarrow u}{\lim }M_{z}^{S}(x_{n},u)=0$. Then we obtain $\mathcal{S}%
(x_{n},x_{n},Tx_{n})\rightarrow 0$ as $x_{n}\rightarrow u$, which implies $%
Tx_{n}\rightarrow Tu=u$. Consequently, $T$ is continuous at $u$.
\end{proof}

We give an example.

\begin{example}
\label{exm1} Let $X=\left\{ 0,2,4,8\right\} $ and $(X,\mathcal{S})$ be the $%
S $-metric space defined as in Example \ref{exm:S-metric}. Let us define the
self-mapping $T:X\rightarrow X$ as%
\begin{equation*}
Tx=\left\{
\begin{array}{ccc}
4 & ; & x\leq 4 \\
2 & ; & x>4%
\end{array}%
\right. \text{,}
\end{equation*}%
for all $x\in \left\{ 0,2,4,8\right\} $. Then $T$ satisfies the conditions
of Theorem \ref{thm1} with $a=\frac{3}{4},b=c=0$ and has a unique fixed
point $x=4$. Indeed, we get the following table$:$%
\begin{equation*}
\begin{array}{ccc}
\mathcal{S}\left( Tx,Tx,Ty\right) =0 & \text{and} & 3\leq M_{z}^{S}\left(
x,y\right) \leq 6\text{ when }x,y\leq 4 \\
\mathcal{S}\left( Tx,Tx,Ty\right) =4 & \text{and} & 6\leq M_{z}^{S}\left(
x,y\right) \leq 12\text{ when }x\leq 4,y>4 \\
\mathcal{S}\left( Tx,Tx,Ty\right) =4 & \text{and} & 6\leq M_{z}^{S}\left(
x,y\right) \leq 12\text{ when }x>4,y\leq 4%
\end{array}%
.
\end{equation*}%
Hence $T$ satisfies the conditions of Theorem \ref{thm1} with%
\begin{equation*}
\phi (t)=\left\{
\begin{array}{ccc}
5 & ; & t\geq 6 \\
\frac{t}{2} & ; & t<6%
\end{array}%
\right.
\end{equation*}%
and%
\begin{equation*}
\delta \left( \varepsilon \right) =\left\{
\begin{array}{ccc}
6 & ; & \varepsilon \geq 3 \\
6-\varepsilon & ; & \varepsilon <3%
\end{array}%
\right. .
\end{equation*}
\end{example}

Now we give the following results as the consequences of Theorem \ref{thm1}.

\begin{corollary}
\label{cor2} Let $(X,\mathcal{S})$ be a complete $S$-metric space and $T$ a
self-mapping on $X$ satisfying the conditions

$i)$ $\mathcal{S}\left( Tx,Tx,Ty\right) <M_{z}^{S}\left( x,y\right) $ for
any $x,y\in X$ with $M_{z}^{S}\left( x,y\right) >0$,

$ii)$ There exists a $\delta =\delta \left( \varepsilon \right) >0$ such
that $\varepsilon <M_{z}^{S}\left( x,y\right) <\varepsilon +\delta $ implies
$\mathcal{S}\left( Tx,Tx,Ty\right) \leq \varepsilon $ for a given $%
\varepsilon >0$.

Then $T$ has a unique fixed point $u\in X$. Also, $T$ is discontinuous at $u$
if and only if $\underset{x\rightarrow u}{\lim }M_{z}^{S}\left( x,u\right)
\neq 0$.
\end{corollary}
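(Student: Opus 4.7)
The plan is to extract Corollary \ref{cor2} as a trivial strengthening of the proof of Theorem \ref{thm1}, rather than by invoking Theorem \ref{thm1} as a black box. The key observation is this: in the proof of Theorem \ref{thm1}, the function $\phi$ is used only as a vehicle to obtain the strict inequality $\mathcal{S}(Tx,Tx,Ty)<M_{z}^{S}(x,y)$ whenever $M_{z}^{S}(x,y)>0$ (this is precisely inequality (\ref{eqno1})). Every subsequent step of the argument — the decreasing-sequence estimates, the contradiction with condition (ii), the Jachymski induction, the identification of the limit as a fixed point, the uniqueness, and the continuity characterisation — uses only that strict inequality, never the pointwise bound $\phi(t)<t$ itself. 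But this strict inequality is exactly hypothesis (i) of the corollary, and hypothesis (ii) is verbatim the same. So I would simply re-run the proof of Theorem \ref{thm1}, deleting each reference to $\phi$.

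In more detail, I would pick $x_0\neq Tx_0$, set $x_{n+1}=Tx_n$, and put $\alpha_n=\mathcal{S}(x_n,x_n,x_{n+1})$. Splitting into the three cases for which term of $M_{z}^{S}(x_{n-1},x_n)$ attains the max, exactly as in (\ref{eqno2})--(\ref{eqno5}), I would conclude $\alpha_n<\xi\alpha_{n-1}$ with $\xi=\max\{a,\tfrac{2}{2-b},\tfrac{c}{2-2c}\}<1$, so $\alpha_n\downarrow\alpha\geq 0$. If $\alpha>0$ then for large $n$ we have $\alpha<\alpha_n<\alpha+\delta(\alpha)$, which by (ii) forces $\alpha_n=\mathcal{S}(Tx_{n-1},Tx_{n-1},Tx_n)\leq\alpha$, a contradiction; hence $\alpha=0$. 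The Jachymski-type induction (\ref{eqno9})--(\ref{eqno10}) then shows $\{x_n\}$ is Cauchy, so by completeness $x_n\to u$ for some $u\in X$.

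For the fixed point property, assuming $Tu\neq u$ and applying hypothesis (i) to $(u,x_n)$, then passing to the limit using Lemma \ref{lem5} and the joint continuity-in-argument behaviour of $\mathcal{S}$, yields $\mathcal{S}(Tu,Tu,u)<\max\{\tfrac{b}{2},\tfrac{c}{2}\}\mathcal{S}(u,u,Tu)<\mathcal{S}(Tu,Tu,u)$, a contradiction; uniqueness follows by applying (i) to two distinct fixed points $u,v$. The discontinuity equivalence is the same final calculation as in Theorem \ref{thm1}, using $(S2)$ to bound $\mathcal{S}(x_n,x_n,Tx_n)\leq 2\mathcal{S}(x_n,x_n,u)+\mathcal{S}(Tx_n,Tx_n,u)$ in one direction, and deducing $Tx_n\to u=Tu$ from $M_{z}^{S}(x_n,u)\to 0$ in the other.

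The only potential pitfall I see is the temptation to try to \emph{synthesise} a function $\phi$ (e.g.\ by $\phi(t)=\sup\{\mathcal{S}(Tx,Tx,Ty):M_{z}^{S}(x,y)\leq t\}$) and then quote Theorem \ref{thm1} directly; but such a $\phi$ need not satisfy $\phi(t)<t$ pointwise, so this route is not viable. Repeating the proof with $\phi$ suppressed is the clean approach, and there is no genuine obstacle beyond checking carefully that $\phi$ never plays a substantive role in the original argument.
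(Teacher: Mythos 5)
Your proposal is correct and is essentially the paper's own route: Corollary \ref{cor2} is stated without a separate proof as a consequence of Theorem \ref{thm1}, and, as you observe, the only role of $\phi$ in that proof is to produce the strict inequality (\ref{eqno1}), which is exactly your hypothesis $(i)$, so re-running the argument with $\phi$ suppressed is precisely what is intended. (One minor point: the constant should be $\xi=\max\{a,\tfrac{b}{2-b},\tfrac{c}{2-2c}\}$; the $\tfrac{2}{2-b}$ you copied from the paper is a typo, since otherwise $\xi<1$ fails, but this does not affect the substance of either argument.)
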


\begin{corollary}
\label{cor3} Let $(X,\mathcal{S})$ be a complete $S$-metric space and $T$ a
self-mapping on $X$ satisfying the conditions

$i)$ There exists a function $\phi :\mathbb{R}^{+}\rightarrow \mathbb{R}^{+}$
such that $\phi (\mathcal{S}(x,x,y))<\mathcal{S}(x,x,y)$ and $\mathcal{S}%
(Tx,Tx,Ty)\leq \phi (\mathcal{S}(x,x,y))$,

$ii)$ There exists a $\delta =\delta \left( \varepsilon \right) >0$ such
that $\varepsilon <t<\varepsilon +\delta $ implies $\phi (t)\leq \varepsilon
$ for any $t>0$ and a given $\varepsilon >0$.

Then $T$ has a unique fixed point $u\in X$.
\end{corollary}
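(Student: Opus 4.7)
The plan is to mirror the argument used to prove Theorem \ref{thm1}, but with the simpler scalar $\mathcal{S}(x,x,y)$ playing the role of $M_{z}^{S}(x,y)$. Since $a=1$ is forbidden in Theorem \ref{thm1}, this corollary is not a literal specialization, yet its Picard iteration scheme adapts cleanly. First I would pick any $x_{0}\in X$, set $x_{n+1}=Tx_{n}$ and $\alpha _{n}=\mathcal{S}(x_{n},x_{n},x_{n+1})$, and use condition $(i)$ together with $\phi (t)<t$ to obtain $\alpha _{n}\leq \phi (\alpha _{n-1})<\alpha _{n-1}$ as long as $\alpha _{n-1}>0$. Hence $\{\alpha _{n}\}$ is strictly decreasing to some limit $\alpha \geq 0$. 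To force $\alpha =0$ I would invoke $(ii)$ exactly as in Theorem \ref{thm1}: were $\alpha >0$, then for large $n$ we would have $\alpha <\alpha _{n-1}<\alpha +\delta (\alpha )$, so $(ii)$ would give $\phi (\alpha _{n-1})\leq \alpha $ and thus $\alpha _{n}\leq \alpha $, contradicting $\alpha _{n}>\alpha $.

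Next comes the main work, namely verifying that $\{x_{n}\}$ is Cauchy. Following the Jachymski-style induction used in Theorem \ref{thm1}, I would fix $\varepsilon >0$ with $\delta (\varepsilon )<\varepsilon $ (without loss of generality), choose $k$ so that $\alpha _{n}<\delta /4$ for all $n\geq k$, and prove by induction on $n$ that $\mathcal{S}(x_{k},x_{k},x_{k+n})<\varepsilon +\delta /2$. For the inductive step, $(S2)$ combined with Lemma \ref{lem5} yields $\mathcal{S}(x_{k},x_{k},x_{k+n+1})\leq 2\alpha _{k}+\mathcal{S}(x_{k+1},x_{k+1},x_{k+n+1})$, so it suffices to show $\mathcal{S}(Tx_{k},Tx_{k},Tx_{k+n})\leq \varepsilon $. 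I expect this dichotomy to be the only real obstacle: if $\mathcal{S}(x_{k},x_{k},x_{k+n})\leq \varepsilon $ the bound follows from $(i)$ together with $\phi (t)<t$; if instead the inductive hypothesis places $\mathcal{S}(x_{k},x_{k},x_{k+n})$ in the interval $(\varepsilon ,\varepsilon +\delta )$, hypothesis $(ii)$ supplies $\phi \leq \varepsilon $ and the bound again follows. In either case $2\alpha _{k}+\varepsilon <\delta /2+\varepsilon $, closing the induction.

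Once $\{x_{n}\}$ is Cauchy, completeness gives $x_{n}\rightarrow u$ and hence $Tx_{n}=x_{n+1}\rightarrow u$. To see $Tu=u$ I would apply $(i)$ to the pairs $(u,x_{n})$: since $\mathcal{S}(u,u,x_{n})\rightarrow 0$ and $\phi (t)<t$, one gets $\mathcal{S}(Tu,Tu,Tx_{n})\leq \phi (\mathcal{S}(u,u,x_{n}))\rightarrow 0$, so $Tx_{n}\rightarrow Tu$, and uniqueness of the limit forces $Tu=u$. Uniqueness of the fixed point is then immediate by a single application of $(i)$: if $Tv=v$ and $u\neq v$, then $\mathcal{S}(u,u,v)=\mathcal{S}(Tu,Tu,Tv)\leq \phi (\mathcal{S}(u,u,v))<\mathcal{S}(u,u,v)$, a contradiction.
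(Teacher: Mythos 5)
Your proof is correct, and it is worth noting that the paper itself gives no argument for this corollary: it is simply announced as a ``consequence of Theorem \ref{thm1}.'' Your observation that the corollary is \emph{not} a literal specialization of Theorem \ref{thm1} is accurate and important --- taking $b=c=0$ there yields the control term $a\,\mathcal{S}(x,x,y)$ with $a<1$, and converting the hypothesis $\mathcal{S}(Tx,Tx,Ty)\leq\phi(\mathcal{S}(x,x,y))$ into the form $\mathcal{S}(Tx,Tx,Ty)\leq\tilde{\phi}(a\,\mathcal{S}(x,x,y))$ with $\tilde{\phi}(t)<t$ would require the strictly stronger condition $\phi(t)<at$. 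So a self-contained rerun of the argument, which is what you supply, is the honest way to justify the statement. Your version is in fact cleaner than the corresponding steps of the paper's proof of Theorem \ref{thm1}: the monotonicity and $\alpha=0$ steps are immediate since $M_{z}^{S}$ collapses to the single term $\mathcal{S}(x_{n-1},x_{n-1},x_{n})$; in the Cauchy induction you make the case split ($\mathcal{S}(x_k,x_k,x_{k+n})\leq\varepsilon$ versus lying in $(\varepsilon,\varepsilon+\delta)$) explicit, which the paper glosses over; and you obtain $Tu=u$ directly from $\mathcal{S}(Tu,Tu,Tx_n)\leq\phi(\mathcal{S}(u,u,x_n))\to 0$ and uniqueness of limits rather than by the paper's contradiction argument with the max. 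Two trivial housekeeping points you may wish to make explicit: if some $\alpha_{n}=0$ the iteration has already reached a fixed point and one stops there, and in the case $\mathcal{S}(x_k,x_k,x_{k+n})=0$ one has $x_k=x_{k+n}$ so the required bound $\mathcal{S}(Tx_k,Tx_k,Tx_{k+n})\leq\varepsilon$ holds trivially. Neither affects the validity of the argument.
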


The following theorem shows that the power contraction of the type $M_{z}^{S}\left( x,y\right)$ allows also the possibility of discontinuity at the
fixed point.

\begin{theorem}
\label{thm3} Let $(X,\mathcal{S})$ be a complete $S$-metric space and $T$ a
self-mapping on $X$ satisfying the conditions

$i)$ There exists a function $\phi :\mathbb{R}^{+}\rightarrow \mathbb{R}^{+}$
such that $\phi (t)<t$ for each $t>0$ and
\begin{equation*}
\mathcal{S}\left( T^{m}x,T^{m}x,T^{m}y\right) \leq \phi \left(
M_{z}^{S^{\ast }}\left( x,y\right) \right) \text{,}
\end{equation*}%
where%
\begin{equation*}
M_{z}^{S^{\ast }}\left( x,y\right) =\max \left\{
\begin{array}{c}
a\mathcal{S}\left( x,x,y\right) ,\frac{b}{2}\left[ \mathcal{S}\left(
x,x,T^{m}x\right) +\mathcal{S}\left( y,y,T^{m}y\right) \right] , \\
\frac{c}{2}\left[ \mathcal{S}\left( x,x,T^{m}y\right) +\mathcal{S}\left(
y,y,T^{m}x\right) \right]
\end{array}%
\right\}
\end{equation*}%
for all $x,y\in X$,

$ii)$ There exists a $\delta =\delta \left( \varepsilon \right) >0$ such
that $\varepsilon <M_{z}^{S^{\ast }}\left( x,y\right) <\varepsilon +\delta $
implies $\mathcal{S}\left( T^{m}x,T^{m}x,T^{m}y\right) \leq \varepsilon $
for a given $\varepsilon >0$.

Then $T$ has a unique fixed point $u\in X$. Also, $T$ is discontinuous at $u$
if and only if $\underset{x\rightarrow u}{\lim }M_{z}^{S^{\ast }}\left(
x,u\right) \neq 0$.
\end{theorem}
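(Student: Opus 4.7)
The plan is to reduce Theorem \ref{thm3} to Theorem \ref{thm1} applied to the iterate $T^{m}$. The key observation is that the expression $M_{z}^{S^{\ast}}(x,y)$ is exactly the quantity $M_{z}^{S}(x,y)$ obtained by replacing $T$ with $T^{m}$ in the definition of $M_{z}^{S}$. Consequently, hypotheses $(i)$ and $(ii)$ of Theorem \ref{thm3} coincide verbatim with hypotheses $(i)$ and $(ii)$ of Theorem \ref{thm1} applied to the self-mapping $T^{m}:X\to X$. Since $(X,\mathcal{S})$ is complete, Theorem \ref{thm1} immediately produces a unique fixed point $u\in X$ of $T^{m}$, together with the characterization that $T^{m}$ is discontinuous at $u$ if and only if $\lim_{x\to u}M_{z}^{S^{\ast}}(x,u)\neq 0$.

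Next I would promote $u$ to a fixed point of $T$ by the standard iteration trick: since $T^{m}(Tu)=T(T^{m}u)=Tu$, the point $Tu$ is itself fixed by $T^{m}$, so uniqueness of the fixed point of $T^{m}$ forces $Tu=u$. Uniqueness for $T$ is then immediate, because any fixed point of $T$ is automatically fixed by $T^{m}$ and hence must coincide with $u$.

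For the discontinuity equivalence, Theorem \ref{thm1} already delivers that $T^{m}$ is continuous at $u$ if and only if $\lim_{x\to u}M_{z}^{S^{\ast}}(x,u)=0$, so what remains is to identify continuity of $T$ at $u$ with continuity of $T^{m}$ at $u$. The forward direction is routine by induction on the iterates: if $T$ is continuous at $u$ and $x_{n}\to u$, then $Tx_{n}\to Tu=u$, and reapplying continuity of $T$ at $u$ to the sequences $T^{k}x_{n}$ (using $T^{k}u=u$) yields $T^{m}x_{n}\to u=T^{m}u$.

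The delicate point, which I expect to be the main obstacle, is the reverse implication: from $\lim_{x\to u}M_{z}^{S^{\ast}}(x,u)=0$ the contractive hypothesis $(i)$ only directly controls $\mathcal{S}(T^{m}x,T^{m}x,T^{m}u)$, yielding $T^{m}x\to u$ rather than $Tx\to u$. I would attempt this step by mimicking the final paragraph of the proof of Theorem \ref{thm1}: combine the vanishing of $\mathcal{S}(x,x,T^{m}x)$ as $x\to u$ with condition $(S2)$ and Lemma \ref{lem5} to bound $\mathcal{S}(Tx,Tx,u)$ in terms of the intermediate iterates $T^{k}x$, and then propagate convergence through the chain $x\to u,\ T^{m}x\to u$ using $T^{k}u=u$ for every $0\leq k\leq m$ to conclude $Tx\to u$.
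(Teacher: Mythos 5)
Your reduction of the existence and uniqueness claims to Theorem \ref{thm1} applied to $T^{m}$, followed by the commuting trick $T^{m}(Tu)=T(T^{m}u)=Tu$ to force $Tu=u$, is exactly the paper's proof; the paper's argument consists of precisely these two steps and nothing more. In particular, the paper never addresses the discontinuity equivalence at all, so on that portion you are attempting strictly more than the authors do.

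On that remaining portion there is a genuine gap, and it is not one your sketched strategy can close. The direction ``$T$ continuous at $u$ $\Rightarrow$ $\lim_{x\to u}M_{z}^{S^{\ast}}(x,u)=0$'' is fine, as you say. But the converse requires deducing continuity of $T$ at $u$ from data that constrain only $T^{m}$: hypotheses $(i)$ and $(ii)$ and the quantity $M_{z}^{S^{\ast}}$ never mention $Tx$, only $T^{m}x$, so no amount of juggling $(S2)$ and Lemma \ref{lem5} will produce a bound on $\mathcal{S}(Tx,Tx,u)$. Indeed the equivalence as stated for $T$ is false. Take $X=[0,1]\cup\{2\}$ with the $S$-metric generated by the usual metric, $m=2$, and $T0=0$, $T2=0$, $Tx=2$ for $x\in(0,1]$. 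Then $T^{2}\equiv 0$, so both hypotheses of Theorem \ref{thm3} hold trivially, $u=0$ is the unique fixed point, and $\lim_{x\to 0}M_{z}^{S^{\ast}}(x,0)=0$; yet $T$ is discontinuous at $0$ since $T(1/n)=2\not\to 0$. The correct conclusion obtainable from Theorem \ref{thm1} is that \emph{$T^{m}$} is discontinuous at $u$ if and only if $\lim_{x\to u}M_{z}^{S^{\ast}}(x,u)\neq 0$; the statement about $T$ itself should be regarded as an error in the theorem, not as something you failed to prove.
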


\begin{proof}
By Theorem \ref{thm1}, the function $T^{m}$ has a unique fixed point $u$.
Hence we have%
\begin{equation*}
Tu=TT^{m}u=T^{m}Tu
\end{equation*}%
and so $Tu$ is another fixed point of $T^{m}$. From the uniqueness fixed
point, we obtain $Tu=u$, that is, $T$ has a unique fixed point $u$.
\end{proof}

We note that if the $S$-metric $\mathcal{S}$ generates a metric $d$ then we
consider Theorem \ref{thm1} on the corresponding metric space as follows:

\begin{theorem}
\label{thm4} Let $(X,d)$ be a complete metric space and $T$ a self-mapping
on $X$ satisfying the conditions

$i)$ There exists a function $\phi :\mathbb{R}^{+}\rightarrow \mathbb{R}^{+}$
such that $\phi (t)<t$ for each $t>0$ and
\begin{equation*}
d(Tx,Ty)\leq \phi \left( M_{z}\left( x,y\right) \right) \text{,}
\end{equation*}%
for all $x,y\in X$,

$ii)$ There exists a $\delta =\delta \left( \varepsilon \right) >0$ such
that $\varepsilon <M_{z}\left( x,y\right) <\varepsilon +\delta $ implies $%
d(Tx,Ty)\leq \varepsilon $ for a given $\varepsilon >0$.

Then $T$ has a unique fixed point $u\in X$. Also, $T$ is discontinuous at $u$
if and only if $\underset{x\rightarrow u}{\lim }M_{z}\left( x,u\right) \neq
0 $.
\end{theorem}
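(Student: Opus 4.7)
The plan is to deduce Theorem \ref{thm4} directly from Theorem \ref{thm1} by passing to the $S$-metric $\mathcal{S}_d$ generated by $d$, as described in Lemma \ref{lem4}. First I would observe that, for the generated $S$-metric, one has the identity $\mathcal{S}_d(x,x,y)=d(x,y)+d(x,y)=2d(x,y)$, and in particular $\mathcal{S}_d(Tx,Tx,Ty)=2d(Tx,Ty)$. Substituting these identities into the definition of $M_{z}^{S}$ applied to $\mathcal{S}_d$, every term is scaled by a factor of $2$, and hence $M_{z}^{S}(x,y)=2M_{z}(x,y)$ holds for all $x,y\in X$.

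Next I would translate the two hypotheses. For condition $(i)$, define a new function $\widetilde{\phi}:\mathbb{R}^{+}\to\mathbb{R}^{+}$ by $\widetilde{\phi}(t)=2\phi(t/2)$; then $\widetilde{\phi}(t)<t$ for every $t>0$ since $\phi(t/2)<t/2$, and the metric estimate $d(Tx,Ty)\le\phi(M_{z}(x,y))$ becomes
\begin{equation*}
\mathcal{S}_d(Tx,Tx,Ty)=2d(Tx,Ty)\le 2\phi(M_{z}(x,y))=\widetilde{\phi}(M_{z}^{S}(x,y)),
\end{equation*}
which is exactly the hypothesis $(i)$ of Theorem \ref{thm1} on $(X,\mathcal{S}_d)$. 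For condition $(ii)$, given $\varepsilon>0$ pick $\delta(\varepsilon/2)>0$ from the metric hypothesis and set $\widetilde{\delta}=2\delta(\varepsilon/2)$; then $\varepsilon<M_{z}^{S}(x,y)<\varepsilon+\widetilde{\delta}$ is equivalent to $\varepsilon/2<M_{z}(x,y)<\varepsilon/2+\delta(\varepsilon/2)$, which by the metric hypothesis forces $d(Tx,Ty)\le\varepsilon/2$, i.e. $\mathcal{S}_d(Tx,Tx,Ty)\le\varepsilon$. This verifies condition $(ii)$ of Theorem \ref{thm1} for the generated $S$-metric.

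Applying Theorem \ref{thm1} to $(X,\mathcal{S}_d)$ —which is complete by Lemma \ref{lem4}(4)— produces a unique fixed point $u\in X$ of $T$, and gives the characterization that $T$ is $\mathcal{S}_d$-discontinuous at $u$ iff $\lim_{x\to u}M_{z}^{S}(x,u)\ne 0$. To finish, I would transfer this back to the original metric: Lemma \ref{lem4}(2) says convergence in $(X,d)$ coincides with convergence in $(X,\mathcal{S}_d)$, so continuity of $T$ at $u$ has the same meaning in both spaces; and since $M_{z}^{S}(x,u)=2M_{z}(x,u)$, the limit condition $\lim_{x\to u}M_{z}^{S}(x,u)\ne 0$ is equivalent to $\lim_{x\to u}M_{z}(x,u)\ne 0$.

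There is really no hard obstacle here—the whole argument is a bookkeeping reduction. The only minor point worth watching is the choice of $\widetilde{\phi}$ and $\widetilde{\delta}$, which have to absorb the factor of $2$ coming from the generated $S$-metric; once these are defined as above, every requirement of Theorem \ref{thm1} follows mechanically, and Lemma \ref{lem4} supplies the necessary topological equivalence to conclude in the language of $(X,d)$.
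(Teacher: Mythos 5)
Your proof is correct, but it takes a genuinely different route from the paper. The paper's proof of Theorem \ref{thm4} is a one-line remark that one should repeat the arguments of Theorem \ref{thm1} with $d$ in place of $\mathcal{S}$ (which in fact simplifies slightly, since the symmetric triangle inequality replaces condition $(S2)$ and Lemma \ref{lem5}). You instead reduce Theorem \ref{thm4} to Theorem \ref{thm1} by passing to the generated $S$-metric $\mathcal{S}_d(x,y,z)=d(x,z)+d(y,z)$ of Lemma \ref{lem4}, and your bookkeeping is accurate: $\mathcal{S}_d(x,x,y)=2d(x,y)$ gives $M_z^{S}(x,y)=2M_z(x,y)$ termwise, the rescalings $\widetilde{\phi}(t)=2\phi(t/2)$ and $\widetilde{\delta}(\varepsilon)=2\delta(\varepsilon/2)$ correctly transport hypotheses $(i)$ and $(ii)$, and Lemma \ref{lem4} supplies completeness and the equivalence of convergence needed to translate both the fixed-point conclusion and the continuity characterization back to $(X,d)$. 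What your approach buys is that no part of the $\varepsilon$--$\delta$ and Cauchy-sequence machinery has to be re-verified; it also implicitly uses the right direction of the metric/$S$-metric correspondence, since $\mathcal{S}_d$ is always an $S$-metric (Lemma \ref{lem4}(1)) whereas the reverse construction $d_S$ from an $S$-metric can fail to be a metric, as the paper itself points out. The paper's approach, by contrast, keeps Theorem \ref{thm4} logically independent of the $S$-metric formalism, at the cost of an unwritten re-derivation.
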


\begin{proof}
By the similar arguments used in the proof of Theorem \ref{thm1}, the proof
can be easily proved.
\end{proof}

\section{\textbf{An Application to the Fixed-Circle Problem}}

\label{sec:2} In this section, we investigate new solutions to the
fixed-circle problem raised by \"{O}zg\"{u}r and Ta\c{s} in \cite%
{Ozgur-Tas-malaysian} related to the geometric properties of the set $Fix(T)$
for a self mapping $T$ on an $S$-metric space $(X,\mathcal{S})$. Some
fixed-circle or fixed-disc results, as the direct solutions of this problem,
have been studied using various methods on a metric space or some
generalized metric spaces (see \cite{Mlaiki, Mlaiki-Axioms,
Ozgur-Tas-circle-thesis, Ozgur-Tas-Celik, ozgur-aip, Ozgur-simulation,
Pant-Ozgur-Tas, Belgium, Tas math, Tas, Tas-fbed}).

Now we recall the notions of a circle and a disc on an $S$-metric space as
follows:%
\begin{equation*}
C_{x_{0},r}^{S}=\left\{ x\in X:\mathcal{S}(x,x,x_{0})=r\right\}
\end{equation*}%
and%
\begin{equation*}
D_{x_{0},r}^{S}=\left\{ x\in X:\mathcal{S}(x,x,x_{0})\leq r\right\} ,
\end{equation*}%
where $r\in \lbrack 0,\infty )$ \cite{Ozgur-Tas-circle-thesis}, \cite%
{Sedgi-Shobe-Aliouche}.

If $Tx=x$ for all $x\in C_{x_{0},r}^{S}$ (resp. $x\in D_{x_{0},r}^{S}$) then
the circle $C_{x_{0},r}^{S}$ (resp. the disc $D_{x_{0},r}^{S}$) is called as
the fixed circle (resp. fixed disc) of $T$.

We begin the following definition.

\begin{definition}
\label{def2} A self-mapping $T$ is called an $\mathcal{S}$-Zamfirescu type $%
x_{0}$-mapping if there exists $x_{0}\in X$ and $a,b\in \left[ 0,1\right) $
such that%
\begin{equation*}
\mathcal{S}(Tx,Tx,x)>0\Longrightarrow \mathcal{S}(Tx,Tx,x)\leq \max \left\{
\begin{array}{c}
a\mathcal{S}(x,x,x_{0}), \\
\frac{b}{2}\left[ \mathcal{S}(Tx_{0},Tx_{0},x)+\mathcal{S}(Tx,Tx,x_{0})%
\right]
\end{array}%
\right\} \text{,}
\end{equation*}%
for all $x\in X$.
\end{definition}

We define the following number:%
\begin{equation}
\rho :=\inf \left\{ \mathcal{S}(Tx,Tx,x):Tx\neq x,x\in X\right\} .
\label{rho}
\end{equation}%
Now we prove that the set $Fix(T)$ contains a circle (resp. a disc) by means
of the number $\rho $.

\begin{theorem}
\label{thm2} If $T$ is an $\mathcal{S}$-Zamfirescu type $x_{0}$-mapping with
$x_{0}\in X$ and the condition
\begin{equation*}
\mathcal{S}(Tx,Tx,x_{0})\leq \rho
\end{equation*}
holds for each $x\in C_{x_{0},\rho }^{S}$ then $C_{x_{0},\rho }^{S}$ is a
fixed circle of $T$, that is, $C_{x_{0},\rho }^{S}\subset Fix(T)$.
\end{theorem}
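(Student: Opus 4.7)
The plan is to argue by contradiction: assume some point $x$ of the circle $C_{x_{0},\rho}^{S}$ fails to be fixed, and then use the $\mathcal{S}$-Zamfirescu inequality to produce $\mathcal{S}(Tx,Tx,x)<\rho$, in direct conflict with the definition of $\rho$ as the infimum of $\mathcal{S}(Tx,Tx,x)$ over non-fixed points.

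Before running the main argument I would establish two preliminary observations. First, that $x_{0}$ itself is a fixed point of $T$: plugging $x=x_{0}$ into Definition \ref{def2} and using Lemma \ref{lem5}, the right-hand side collapses to $\max\{0,\,b\,\mathcal{S}(Tx_{0},Tx_{0},x_{0})\}$, so $Tx_{0}\neq x_{0}$ would force $\mathcal{S}(Tx_{0},Tx_{0},x_{0})\leq b\,\mathcal{S}(Tx_{0},Tx_{0},x_{0})$ with $b<1$, a contradiction. Second, the case $\rho=0$ is trivial, since then $C_{x_{0},\rho}^{S}=\{x_{0}\}\subset Fix(T)$; I will assume $\rho>0$ from here on.

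Now fix $x\in C_{x_{0},\rho}^{S}$, so $\mathcal{S}(x,x,x_{0})=\rho$, and suppose toward a contradiction that $Tx\neq x$, i.e. $\mathcal{S}(Tx,Tx,x)>0$. Applying Definition \ref{def2}, there are two sub-cases according to which term realizes the maximum. If the maximum equals $a\,\mathcal{S}(x,x,x_{0})=a\rho$, then $\mathcal{S}(Tx,Tx,x)\leq a\rho<\rho$ since $a<1$. If the maximum equals $\frac{b}{2}[\mathcal{S}(Tx_{0},Tx_{0},x)+\mathcal{S}(Tx,Tx,x_{0})]$, I would use $Tx_{0}=x_{0}$ together with Lemma \ref{lem5} to rewrite $\mathcal{S}(Tx_{0},Tx_{0},x)=\mathcal{S}(x_{0},x_{0},x)=\mathcal{S}(x,x,x_{0})=\rho$, and combine this with the standing hypothesis $\mathcal{S}(Tx,Tx,x_{0})\leq\rho$ to get $\mathcal{S}(Tx,Tx,x)\leq\frac{b}{2}(\rho+\rho)=b\rho<\rho$. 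Either way, the inequality $\mathcal{S}(Tx,Tx,x)<\rho$ together with $Tx\neq x$ contradicts the definition of $\rho$ as an infimum. Hence $Tx=x$, proving $C_{x_{0},\rho}^{S}\subset Fix(T)$.

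The only real subtlety is the auxiliary step that $x_{0}\in Fix(T)$, which is needed to bound $\mathcal{S}(Tx_{0},Tx_{0},x)$ in the second sub-case; without it the $b$-term of the Zamfirescu condition is uncontrolled. Everything else is a one-line application of the hypothesis together with Lemma \ref{lem5} and the infimum characterization of $\rho$.
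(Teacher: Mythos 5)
Your proposal is correct and follows essentially the same route as the paper's proof: first establish $Tx_{0}=x_{0}$ from the Zamfirescu condition with $b<1$, dispose of the case $\rho=0$, and then for $x\in C_{x_{0},\rho}^{S}$ with $Tx\neq x$ bound the right-hand side by $\max\{a\rho,b\rho\}<\rho$, contradicting the infimum defining $\rho$. Your version merely makes explicit the use of $Tx_{0}=x_{0}$ and Lemma \ref{lem5} in evaluating the $b$-term, which the paper leaves implicit.
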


\begin{proof}
At first, we show that $x_{0}$\ is a fixed point of $T$. On the contrary,
let $Tx_{0}\neq x_{0}$. Then we have $\mathcal{S}(Tx_{0},Tx_{0},x_{0})>0$.
By the definition of an $\mathcal{S}$-Zamfirescu type $x_{0}$-mapping and
the condition $(S1)$, we obtain
\begin{eqnarray*}
\mathcal{S}(Tx_{0},Tx_{0},x_{0}) &\leq &\max \left\{ a\mathcal{S}%
(x_{0},x_{0},x_{0}),\frac{b}{2}\left[ \mathcal{S}(Tx_{0},Tx_{0},x_{0})+%
\mathcal{S}(Tx_{0},Tx_{0},x_{0})\right] \right\} \\
&=&b\mathcal{S}(Tx_{0},Tx_{0},x_{0}),
\end{eqnarray*}%
a contradiction because of $b\in \left[ 0,1\right) $. This shows that $%
Tx_{0}=x_{0}$.

We have two cases:

\textbf{Case 1: }If\textbf{\ }$\rho =0$, then we get $C_{x_{0},\rho
}^{S}=\{x_{0}\}$ and clearly this is a fixed circle of $T$.

\textbf{Case 2:} Let $\rho >0$ and $x\in C_{x_{0},\rho }^{S}$ be any point
such that $Tx\neq x$. Then we have
\begin{equation*}
\mathcal{S}(Tx,Tx,x)>0
\end{equation*}
and using the hypothesis we obtain%
\begin{eqnarray*}
\mathcal{S}(Tx,Tx,x) &\leq &\max \left\{ a\mathcal{S}(x,x,x_{0}),\frac{b}{2}%
\left[ \mathcal{S}(Tx_{0},Tx_{0},x)+\mathcal{S}(Tx,Tx,x_{0})\right] \right\}
\\
&\leq &\max \left\{ a\rho ,b\rho \right\} <\rho ,
\end{eqnarray*}%
which is a contradiction with the definition of $\rho $. Hence it should be $%
Tx=x$ whence $C_{x_{0},\rho }^{S}$ is a fixed circle of $T$.
\end{proof}

\begin{corollary}
\label{cor1} If $T$ is an $\mathcal{S}$-Zamfirescu type $x_{0}$-mapping with
$x_{0}\in X$ and the condition
\begin{equation*}
\mathcal{S}(Tx,Tx,x_{0})\leq \rho
\end{equation*}
holds for each $x\in D_{x_{0},\rho }^{S}$ then $D_{x_{0},\rho }^{S}$ is a
fixed disc of $T$, that is, $D_{x_{0},\rho }^{S}\subset Fix(T)$.
\end{corollary}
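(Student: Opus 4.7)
The plan is to mimic the proof of Theorem \ref{thm2} essentially verbatim, replacing the equality $\mathcal{S}(x,x,x_0)=\rho$ by the inequality $\mathcal{S}(x,x,x_0)\leq\rho$ that characterizes points of the disc. The geometric content is the same: no point of $D_{x_0,\rho}^S$ can be moved by $T$, because any such motion would create a displacement strictly smaller than $\rho$, contradicting the definition of $\rho$ as an infimum.

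First I would show that $x_0$ is itself fixed. The argument is identical to the one in Theorem \ref{thm2}: assuming $Tx_0\neq x_0$ gives $\mathcal{S}(Tx_0,Tx_0,x_0)>0$, and applying the $\mathcal{S}$-Zamfirescu type $x_0$-mapping inequality with $x=x_0$ together with axiom $(S1)$ collapses the maximum to $b\,\mathcal{S}(Tx_0,Tx_0,x_0)$, which forces a contradiction since $b\in[0,1)$. Next I would split into two cases according to the value of $\rho$. If $\rho=0$, then $D_{x_0,\rho}^S=\{x_0\}$ by axiom $(S1)$, and the disc is trivially fixed by the previous step.

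The substantive case is $\rho>0$. Pick any $x\in D_{x_0,\rho}^S$ and suppose for contradiction that $Tx\neq x$, so that $\mathcal{S}(Tx,Tx,x)>0$. Now I invoke the $\mathcal{S}$-Zamfirescu type bound and estimate each argument of the maximum: the term $a\,\mathcal{S}(x,x,x_0)$ is at most $a\rho$ since $x\in D_{x_0,\rho}^S$; for the other term, $\mathcal{S}(Tx_0,Tx_0,x)=\mathcal{S}(x_0,x_0,x)\leq\rho$ using that $Tx_0=x_0$, while $\mathcal{S}(Tx,Tx,x_0)\leq\rho$ is exactly the standing hypothesis of the corollary. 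Hence $\mathcal{S}(Tx,Tx,x)\leq\max\{a\rho,b\rho\}<\rho$, contradicting the definition of $\rho$ as the infimum of displacements over non-fixed points. Therefore $Tx=x$ for every $x\in D_{x_0,\rho}^S$, which is the required inclusion $D_{x_0,\rho}^S\subset Fix(T)$.

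I do not anticipate a real obstacle here, since Theorem \ref{thm2} already does the work on the boundary circle and the hypothesis of the corollary has been upgraded to cover the entire disc. The only point requiring a small amount of care is keeping track that both $\mathcal{S}(x,x,x_0)\leq\rho$ (from membership in $D_{x_0,\rho}^S$) and $\mathcal{S}(Tx,Tx,x_0)\leq\rho$ (from the new assumption) are available for every $x$ in the disc, which is precisely what allows the argument of Theorem \ref{thm2} to go through without modification.
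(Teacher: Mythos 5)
Your proof is correct and follows exactly the route the paper intends: the paper gives no separate argument for this corollary, treating it as the verbatim adaptation of the proof of Theorem \ref{thm2} in which $\mathcal{S}(x,x,x_{0})=\rho$ is relaxed to $\mathcal{S}(x,x,x_{0})\leq\rho$, and your estimates (including the use of $Tx_{0}=x_{0}$ and Lemma \ref{lem5} to bound $\mathcal{S}(Tx_{0},Tx_{0},x)$ by $\rho$) are precisely what is needed.
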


Now we give an illustrative example to show the effectiveness of our results.

\begin{example}
\label{exm4} Let $X=%
\mathbb{R}
$ and $(X,\mathcal{S})$ be the $S$-metric space defined as in Example \ref%
{exm:S-metric}. Let us define the self-mapping $T:X\rightarrow X$ as%
\begin{equation*}
Tx=\left\{
\begin{array}{ccc}
x & ; & x\mathcal{\in }\left[ -3,3\right] \\
x+1 & ; & x\mathcal{\notin }\left[ -3,3\right]%
\end{array}%
\right. \text{,}
\end{equation*}%
for all $x\in
\mathbb{R}
$. Then $T$ is an $\mathcal{S}$-Zamfirescu type $x_{0}$-mapping with $%
x_{0}=0,a=\frac{1}{2}$ and $b=0$. Indeed, we get%
\begin{equation*}
\mathcal{S}(Tx,Tx,x)=2\left\vert Tx-x\right\vert =2>0\text{,}
\end{equation*}%
for all $x\in \left( -\infty ,-3\right) \cup \left( 3,\infty \right) $. So
we obtain%
\begin{eqnarray*}
\mathcal{S}(Tx,Tx,x) &=&2\leq \max \left\{ aS\left( x,x,0\right) ,\frac{b}{2}%
\left[ \mathcal{S}(0,0,x)+\mathcal{S}(x+1,x+1,0)\right] \right\} \\
&=&\frac{1}{2}.2\left\vert x\right\vert .
\end{eqnarray*}%
Also we have
\begin{equation*}
\rho =\inf \left\{ \mathcal{S}(Tx,Tx,x):Tx\neq x,x\in X\right\} =2
\end{equation*}%
and%
\begin{equation*}
\mathcal{S}(Tx,Tx,0)=\mathcal{S}(x,x,0)\leq 2\text{,}
\end{equation*}%
for all $x\in C_{0,2}^{S}=\left\{ x:\mathcal{S}(x,x,0)=2\right\} =\left\{
x:2\left\vert x\right\vert =2\right\} =\left\{ x:\left\vert x\right\vert
=1\right\} $. Consequently, $T$ fixes the circle $C_{0,2}^{S}$ and the disc $%
D_{0,2}^{S}$.
\end{example}

\textbf{Acknowledgement.} This work is financially supported by Balikesir
University under the Grant no. BAP 2018 /019 and BAP 2018 /021.


\begin{thebibliography}{99}
\bibitem{Bisht-2017-1} R. K. Bisht and R. P. Pant, A remark on discontinuity
at fixed point, J. Math. Anal. Appl.\textit{\ }445 (2017), 1239-1242.

\bibitem{Bisht-2017-2} R. K. Bisht and R. P. Pant, Contractive definitions
and discontinuity at fixed point, Appl. Gen. Topol. 18 (2017), no. 1,
173-182.

\bibitem{Bisht-3} R. K. Bisht and N. Hussain, A note on convex contraction
mappings and discontinuity at fixed point, J. Math. Anal. 8 (2017), no. 4,
90-96.

\bibitem{Bisht and Rakocevic} R. K. Bisht and V. Rako\v{c}evi\'{c},
Generalized Meir-Keeler type contractions and discontinuity at fixed point,
Fixed Point Theory 19 (2018), no. 1, 57-64.

\bibitem{Bisht-Tbilisi} R. K. Bisht, $\left( \varepsilon -\delta \right) $
conditions and fixed point theorems, Tbil. Math. J. 12 (2019), no. 3, 39-49.

\bibitem{Bisht 2019} R. K. Bisht and N. \"{O}zg\"{u}r, Geometric properties
of discontinuous fixed point set of ($\epsilon -\delta $) contractions and
applications to neural networks, Aequat. Math. (2019).
https://doi.org/10.1007/s00010-019-00680-7

\bibitem{Forti-2003} M. Forti and P. Nistri, Global convergence of neural
networks with discontinuous neuron activations, IEEE Trans. Circuits Syst.
I, Fundam. Theory Appl. 50 (2003), no. 11, 1421-1435.

\bibitem{Gupta} A. Gupta, Cyclic contraction on $S$-metric space, Int. J.
Anal. Appl. 3 (2) (2013), 119-130.

\bibitem{Hieu} N. T. Hieu, N. T. Ly and N. V. Dung, A generalization of
Ciric quasi-contractions for maps on $S$-metric spaces, Thai J. Math. 13 (2)
(2015), 369-380.

\bibitem{Jach-1} J. Jachymski, Common fixed point theorems for some families
of maps, Indian J. Pure Appl. Math. 25 (1994), 925-937.

\bibitem{Jach-2} J. Jachymski, Equivalent conditions and Meir--Keeler type
theorems, J. Math. Anal. Appl. 194 (1995), 293-303.

\bibitem{Liu-2011} X. Liu, T. Chen, J. Cao and W. Lu, Dissipativity and
quasi-synchronization for neural networks with discontinuous activations and
parameter mismatches, Neural Networks 24 (2011), no. 10, 1013-1021.

\bibitem{Lu-2005} W. Lu and T. Chen, Dynamical behaviors of Cohen--Grossberg
neural networks with discontinuous activation functions, Neural Networks 18
(2005), no. 3, 231-242.

\bibitem{Lu-2006} W. Lu and T. Chen, Dynamical behaviors of delayed neural
network systems with discontinuous activation functions, Neural Computation
18 (2006), no. 3, 683-708.

\bibitem{Mlaiki} N. Mlaiki, U. \c{C}elik, N. Ta\c{s}, N. Y. \"{O}zg\"{u}r and
A. Mukheimer, Wardowski type contractions and the fixed-circle problem on $S$%
-metric spaces, J. Math. 2018, Article ID 9127486.

\bibitem{Mlaiki-Axioms} N. Mlaiki, N. Ta\c{s} and N. Y. \"{O}zg\"{u}r, On the
fixed-circle problem and Khan type contractions, Axioms, 7 (2018), 80.

\bibitem{Ozgur-mathsci} N. Y. \"{O}zg\"{u}r and N. Ta\c{s}, Some new
contractive mappings on $S$-metric spaces and their relationships with the
mapping\ $(S25)$, Math. Sci. (Springer) 11 (1) (2017), 7-16.

\bibitem{Ozgur-chapter} N. Y. \"{O}zg\"{u}r, N. Ta\c{s}, Generalizations of
metric spaces: from the fixed-point theory to the fixed-circle theory, In:
Rassias T. (eds) Applications of Nonlinear Analysis. Springer Optimization
and Its Applications, vol 134. Springer, Cham (2018).

\bibitem{Ozgur-Tas-malaysian} N. Y. \"{O}zg\"{u}r and N. Ta\c{s}, Some
fixed-circle theorems on metric spaces, Bull. Malays. Math. Sci. Soc. \emph{%
Bull. Malays. Math. Sci. Soc.} 42 (4) (2019), 1433-1449.

\bibitem{Ozgur-Tas-circle-thesis} N. Y. \"{O}zg\"{u}r and N. Ta\c{s},
Fixed-circle problem on $S$-metric spaces with a geometric viewpoint, Facta
Universitatis. Series: Mathematics and Informatics 34 (3) (2019), 459-472.

\bibitem{Ozgur-Tas-Celik} N. Y. \"{O}zg\"{u}r, N. Ta\c{s} and U. \c{C}elik,
New fixed-circle results on $S$-metric spaces, Bull. Math. Anal. Appl. 9 (2)
(2017), 10-23.

\bibitem{ozgur-aip} N. Y. \"{O}zg\"{u}r and N. Ta\c{s}, Some fixed-circle
theorems and discontinuity at fixed circle, AIP Conference Proceedings 1926,
020048 (2018).

\bibitem{Ozgur-simulation} N. Y. \"{O}zg\"{u}r, Fixed-disc results via
simulation functions, Turkish Journal of Mathematics. doi:
10.3906/mat-1812-44

\bibitem{PantA} A. Pant and R. P. Pant, Fixed points and continuity of
contractive maps, Filomat 31 (2017), no. 11, 3501-3506.

\bibitem{Pant} R. P. Pant, Discontinuity and fixed points, J. Math. Anal.
Appl. 240 (1999), 284-289.

\bibitem{Pant-Ozgur-Tas} R. P. Pant, N.Y. \"{O}zg\"{u}r and N. Ta\c{s}, On
discontinuity problem at fixed point, Bull. Malays. Math. Sci. Soc. (2018).
https://doi.org/10.1007/s40840-018-0698-6.

\bibitem{Belgium} R. P. Pant, N. Y. \"{O}zg\"{u}r and N. Ta\c{s},
Discontinuity at fixed points with applications, accepted in Bulletin of the
Belgian Mathematical Society-Simon Stevin.


\bibitem{Rhoades} B. E. Rhoades, Contractive definitions and continuity,
Fixed point theory and its applications (Berkeley, CA, 1986), 233-245,
Contemp. Math., 72, Amer. Math. Soc., Providence, RI, 1988.

\bibitem{Sedgi-Shobe-Aliouche} S. Sedghi, N. Shobe and A. Aliouche, A
generalization of fixed point theorems in $S$-metric spaces, Mat. Vesnik 64
(3) (2012), 258-266.

\bibitem{Tas math} N. Ta\c{s}, N. Y. \"{O}zg\"{u}r and N. Mlaiki, New types
of $F_{C}$-contractions and the fixed-circle problem, Mathematics 6 (2018),
no. 10, 188.

\bibitem{Tas} N. Ta\c{s}, Suzuki-Berinde type fixed-point and fixed-circle
results on $S$-metric spaces, J. Linear Topol. Algebra 7 (3) (2018), 233-244.

\bibitem{Tas-fbed} N. Ta\c{s}, Various types of fixed-point theorems on $S$%
-metric spaces, J. BAUN Inst. Sci. Technol. 20 (2) (2018), 211-223.

\bibitem{Zamfirescu} T. Zamfirescu, Fix point theorems in metric spaces,
Arch. Math. 23 (1972), 292-298.

\end{thebibliography}
\end{document}